  %

\documentclass[11 pt]{article} 
\usepackage[applemac]{inputenc} 
\usepackage[pdftex]{graphicx}

\usepackage{hyperref} 

\usepackage{amsmath}
\usepackage{amsthm}
\usepackage{latexsym}
\usepackage{amssymb} 

\pagestyle{plain}
 \def\sk{\medskip}
\def\rmd{{\mathrm{d}}}
\def\C{\mathbb{C}}

\def\Z{\mathbb{Z}}

\def\calE{{\mathcal{E}}}
\def\bfc{{\mathbf{c}}}

\def\K{{\mathbb K}}
\def\calO{{\mathcal O}}
\def\Card{{\mathrm{Card}}}

\def\og{\leavevmode\raise.3ex\hbox{$\scriptscriptstyle
\langle\!\langle$}}

\def\fg{\leavevmode\raise.3ex\hbox{$\scriptscriptstyle
\,\rangle\!\rangle$}}

\def\atop#1#2{
\genfrac{}{}{0pt} {} 
{#1} 
{#2}}

\newtheorem{theorem}{\indent Theorem}
\newtheorem{proposition}{\indent Proposition}
\newtheorem{lemma}{\indent Lemma}


\begin{document}

 \noindent
 ICPAM2 - GOROKA 2014

\begin{center}

 International Conference on Pure and Applied Mathematics.
 
 ``Contemporary Developments in the Mathematical Sciences as Tools for
 
 Scientific and Technological Transformation of Papua New Guinea''

\bigskip

\Large
\bf
Linear recurrence sequences 

and twisted binary forms 

\rm
 
 \bigskip

{\it Claude Levesque} and {\it Michel Waldschmidt }

\end{center}

\bigskip

\section*{Abstract}
Let 
$
 \prod_{i=1}^d (X-\alpha_i Y) \in\C[X,Y]
$
be a binary form 
and let $\epsilon_1,\dots,\epsilon_d$ be nonzero complex numbers. We consider the family of binary forms
 $
 \prod_{i=1}^d (X-\alpha_i \epsilon_i^aY)
$, $a\in\Z$,
which we write as  
\par 
 \mbox{}$\quad 
 X^d-U_1(a)X^{d-1}Y+\cdots+(-1)^{d-1} U_{d-1}(a) XY^{d-1}+(-1)^d U_d(a) Y^d.
$\\
 In this paper we study these sequences $\bigl(U_h(a)\bigr)_{a\in\Z}$ which turn out to be linear recurrence sequences.

\section*{R\'esum\'e}
Soit $
 \prod_{i=1}^d (X-\alpha_i Y) 
$
 une forme binaire de $\C[X,Y]$
et soit $\epsilon_1,\dots,\epsilon_d$ des nombres complexes non nuls. Nous consid\'erons la famille des formes binaires
 \mbox{$
 \prod_{i=1}^d (X-\alpha_i \epsilon_i^aY)
$}, $a\in\Z$,
que nous \'ecrivons sous la forme\par
 \mbox{}$\quad 
 X^d-U_1(a)X^{d-1}Y+\cdots+(-1)^{d-1} U_{d-1}(a) XY^{d-1}+(-1)^d U_d(a) Y^d.
$\\
Le but de cet article est d'\'etudier ces suites $\bigl(U_h(a)\bigr)_{a\in\Z}$ qui s'av\`erent \^etre des suites r\'ecurrentes lin\'eaires. 

\bigskip

\noindent
{\tt Keywords:} Linear recurrence sequences; binary forms; units of algebraic number fields; families of Diophantine equations; exponential polynomials

\medskip
\noindent
{\small
{\tt AMS Mathematics Subject Classification(2010):} primary 
11B37 
\\
 Secundary: 05A15 11D61 33B10 39A10 65Q30}

\normalsize
 
\section{Introduction}\label{S:Introduction}
Let us consider a binary form $F_0(X,Y)\in\C[X,Y]$ which satisfies $F_0(1,0)=1$. We write it as 
$$
F_0(X,Y)=X^d+a_1X^{d-1}Y+\cdots+a_dY^d \;=\; \prod_{i=1}^d (X-\alpha_i Y).
$$
Let $\epsilon_1,\dots,\epsilon_d$ be $d$ nonzero complex numbers not necessarily distinct. Twisting $F_0$ by the powers $\epsilon_1^a,\dots,\epsilon_d^a$ ($a\in\Z$), we obtain the family of binary forms
\begin{equation}\label{Equation:FaSomme}
F_a(X,Y)=\prod_{i=1}^d (X-\alpha_i \epsilon_i^aY),
\end{equation}
which we write as
\begin{equation}\label{Equation:FaProduit}
F_a(X,Y)=X^d-U_1(a)X^{d-1}Y+\cdots+(-1)^{d-1} U_{d-1}(a) XY^{d-1}+(-1)^d U_d(a) Y^d.
\end{equation}
Therefore
$$
U_h(0)=(-1)^h a_h \qquad (1\le h\le d).
$$

In \cite{LW-AA} and \cite{LW-Ram}, we consider some families of diophantine equations
$$
F_a(x,y)=m
$$ 
obtained in the same way from a given irreducible form $F(X,Y)$ with coefficients in $\Z$, when $\epsilon_1,\dots,\epsilon_d$ are algebraic units and when the algebraic numbers $\alpha_1 \epsilon_1,\dots,\alpha_d \epsilon_d$ are Galois conjugates with $d\ge 3$.  
The results in \cite{LW-Ram} are effective, the results in \cite{LW-AA} are more general but not effective. The next result follows from Theorem 3.3 of \cite{LW-AA}. 

\begin{theorem}
Let $K$ be a number field of degree $d\ge 3$, $S$ a finite set of places of $K$ containing the places at infinity. Denote by $\calO_S$ the ring of $S$--integers of $K$ and by 
$\calO_S^\times$ the group of $S$--units of $K$. 
Assume $\alpha_1, \ldots , \alpha_d, \epsilon_1, \ldots , \epsilon_d$
belong to $K^\times$. 
Then there are only finitely many $(x,y,a)$ in $\calO_S\times\calO_S\times\Z$ satisfying 
$$
F_a(x,y)\in\calO_S^\times,
\quad xy\not=0
\quad\hbox{and}\quad
\Card\{\alpha_1\epsilon_1^a,\dots,\alpha_d\epsilon_d^a\}\ge 3.
$$

\end{theorem}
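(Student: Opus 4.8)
The plan is to turn the single condition ``$F_a(x,y)\in\calO_S^\times$'' into a unit equation living in a \emph{fixed} finitely generated group, to which a finiteness theorem for $S$-unit equations applies. First I would enlarge $S$ to a finite set $S'\supseteq S$ so that all of $\alpha_1,\dots,\alpha_d,\epsilon_1,\dots,\epsilon_d$ lie in $\calO_{S'}^\times$. For a solution $(x,y,a)$ put $\gamma_i=\alpha_i\epsilon_i^a$ and $\beta_i=x-\gamma_i y$, so that $F_a(x,y)=\prod_{i=1}^d\beta_i$. Since each $\gamma_i$ is an $S'$-unit and $x,y\in\calO_S\subseteq\calO_{S'}$, each $\beta_i$ is an $S'$-integer; as their product $F_a(x,y)$ is an $S$-unit, hence an $S'$-unit, and $\calO_{S'}$ is a Dedekind domain, each factor $\beta_i$ is itself an $S'$-unit. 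I would use the hypotheses $xy\neq0$ and $\Card\{\gamma_1,\dots,\gamma_d\}\ge3$ to exclude the collapsed cases: when $y=0$ all $\beta_i=x$ and when $x=0$ all $\beta_i=-\gamma_i y$, situations in which $F_a(x,y)$ can be a unit for infinitely many $a$, so that these must be ruled out.

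Next, using $\Card\{\gamma_1,\dots,\gamma_d\}\ge3$ I would select, for each admissible $a$, three indices with $\gamma_i,\gamma_j,\gamma_k$ pairwise distinct. Since there are only $\binom{d}{3}$ triples, a pigeonhole argument reduces the problem to proving finiteness within the subfamily attached to one fixed triple, which I relabel $(1,2,3)$. The identity $(\gamma_2-\gamma_3)\beta_1+(\gamma_3-\gamma_1)\beta_2+(\gamma_1-\gamma_2)\beta_3=0$ holds because the coefficients of $x$ and of $y$ both cancel. Expanding it gives the six-term relation
$$\gamma_2\beta_1-\gamma_3\beta_1+\gamma_3\beta_2-\gamma_1\beta_2+\gamma_1\beta_3-\gamma_2\beta_3=0,$$
and each of the six summands is a product of two $S'$-units, hence lies in the \emph{fixed} finitely generated group $\calO_{S'}^\times$. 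The decisive point is that, because $\epsilon_i\in\calO_{S'}^\times$, every power $\epsilon_i^a$ remains in this one fixed group: the exponent $a$ has been absorbed into the coordinates rather than into the coefficients of a varying equation.

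Now I would invoke the finiteness theorem for $S$-unit equations (the consequence of the Subspace Theorem underlying Theorem~3.3 of~\cite{LW-AA}): the projective solutions $(z_1:\dots:z_6)\in\mathbb{P}^5$ of $z_1+\dots+z_6=0$ with all $z_\ell\in\calO_{S'}^\times$ and \emph{no vanishing proper subsum} form a finite set. Applied to the six summands above, this pins the ratios $\beta_1/\beta_3,\ \beta_2/\beta_3$ and $\gamma_1/\gamma_2,\ \gamma_2/\gamma_3$ down to finitely many values. From $\gamma_i/\gamma_j=(\alpha_i/\alpha_j)(\epsilon_i/\epsilon_j)^a$ lying in a finite set, the integer $a$ is confined to a finite set whenever some $\epsilon_i/\epsilon_j$ is not a root of unity. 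For each of the finitely many resulting values of $a$, the form $F_a$ is fixed with at least three distinct roots, and the classical Thue--Mahler finiteness for $\prod_i(x-\gamma_i y)\in\calO_S^\times$ leaves only finitely many $(x,y)$, recovering the finiteness of $(x,y,a)$.

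The main obstacle is twofold and lies exactly where the non-effectivity of~\cite{LW-AA} originates. First, one must dispose of the \emph{degenerate} solutions, in which a proper subsum of the six terms vanishes; these escape the finiteness theorem and require a separate combinatorial descent, each vanishing subsum forcing an algebraic relation among the $\gamma_i$ that either contradicts their distinctness or reduces $d$. Second, when every ratio $\epsilon_i/\epsilon_j$ among the chosen indices is a root of unity, the ratios $\gamma_i/\gamma_j$ no longer determine $a$, and one must argue separately---by factoring out the torsion and tracking the genuinely $a$-dependent part---that the subfamily still contains only finitely many integral points. Checking that the passage from the projective finiteness statement back to finiteness of the \emph{integral} triples $(x,y,a)$ respects the twisted homogeneity of $F_a$ is the delicate remaining point.
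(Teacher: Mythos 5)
You should know at the outset that this paper contains no proof of the theorem: it is quoted as a consequence of Theorem 3.3 of \cite{LW-AA}, so your attempt can only be measured against the method of that reference, which your skeleton does follow (enlarge $S$ to $S'$ so that all $\alpha_i,\epsilon_i$ are $S'$--units, deduce that each factor $\beta_i=x-\alpha_i\epsilon_i^a y$ is an $S'$--unit, feed Siegel's identity into the finiteness theorem for non-degenerate $S$--unit equations, pin down the ratios $\gamma_i/\gamma_j$, recover $a$, finish with Thue--Mahler for the finitely many remaining forms). Within that skeleton two points are treated too lightly but are repairable: the pigeonholed triple must be chosen so as to \emph{contain} a pair $(i,j)$ with $\epsilon_i/\epsilon_j$ of infinite order, since the unit-equation step only controls ratios attached to the chosen triple; and the vanishing-subsum analysis is not a side remark but the bulk of such proofs (some subsums collapse, e.g.\ $\gamma_2\beta_1-\gamma_1\beta_2=(\gamma_2-\gamma_1)x$ cannot vanish when $x\neq0$ and $\gamma_1\neq\gamma_2$, while others are genuine cases that pin different ratios and must be processed one by one).

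The unrepairable gap is your second ``obstacle''. The statement as reproduced in this paper is \emph{false} when every ratio $\epsilon_i/\epsilon_j$ is a root of unity, so no argument --- in particular not ``factoring out the torsion and tracking the genuinely $a$-dependent part'' --- can establish finiteness there. Concretely, let $K=\Q(\zeta_7+\zeta_7^{-1})$ (a cyclic cubic field), let $\alpha_1,\alpha_2,\alpha_3$ be the three distinct conjugates of $\zeta_7+\zeta_7^{-1}$, whose minimal polynomial is $t^3+t^2-2t-1$, let $\epsilon_1=\epsilon_2=\epsilon_3=1$, and let $S$ be the set of archimedean places. Then $F_a$ does not depend on $a$, and $F_a(1,-1)=\prod_{i=1}^3(1+\alpha_i)=-1\in\calO_S^\times$, with $xy=-1\neq0$ and $\Card\{\alpha_1,\alpha_2,\alpha_3\}=3$; hence $(1,-1,a)$ is a solution for every $a\in\Z$. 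Assuming merely that some $\epsilon_i$ has infinite order does not help either: if $\epsilon_1=\cdots=\epsilon_d=\epsilon\in\calO_S^\times$ is of infinite order, then $F_a(x,\epsilon^{-a}z)=F_0(x,z)$, so one solution of $F_0(x,z)\in\calO_S^\times$ spawns the infinite family $(x,\epsilon^{-a}z,a)$. Since the quoted statement is false, it cannot actually follow from Theorem 3.3 of \cite{LW-AA}; a non-degeneracy hypothesis --- that at least one ratio $\epsilon_i/\epsilon_j$ is not a root of unity, which in the conjugate setting of \cite{LW-AA} is guaranteed by requiring the twisting unit not to be a root of unity --- has been lost in this survey's transcription. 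What your argument proves, once the triple is chosen as above and the subsum analysis is written out, is exactly that corrected statement; it cannot prove the literal one.
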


Section $\ref{S:SuitesRecurrentesLineaires}$ is an introduction to linear recurrence sequences. 
In Section $\ref{S:FamillesFormesBinaires}$ we observe that in the general case each of the sequences $\bigl(U_h(a)\bigr)_{a\in\Z}$ coming from the coefficients of the relation $(\ref{Equation:FaProduit})$ is a linear recurrence sequence.

\section{Linear recurrence sequences }\label{S:SuitesRecurrentesLineaires}

Let us recall some well known facts about linear recurrence sequences; (see for instance \cite{WMS}, Chapter C of \cite{ST}, and also \cite{CMP}, \cite{EPSW}, \cite{Lev1}, \cite{Lev2}, \cite{vdP}). Then we apply these results to the families of binary forms given in $(\ref{Equation:FaSomme})$ and $(\ref{Equation:FaProduit})$.

\subsection{Generalities}\label{SS:SuitesRecurrentesLineaires}
Let $\K$ be a field of characteristic 0. The sequences $\bigl( u(a)\bigr)_{a\in\Z}$, with values in $\K$ and indexed by $\Z$, form a vector space
$\K^\Z$ over $\K$. Let $\bfc=(c_1,\dots,c_d)\in \K^d$ with $c_d\not=0$. The sequences, satisfying the
linear recurrence relation of order $d$ given by 
\begin{equation}\label{Equation:relationrecurrencehomogene}
u(a+d)=c_1u(a+d-1)+\cdots + c_du(a),
\end{equation}
form a $\K$--vector subspace $E_{\bfc}$ of $\K^\Z$ of dimension $d$, a natural canonical basis being given by the $d$ sequences $u_0,\dots,u_{d-1}$ defined by the initial conditions
$$
u_j(a)=\delta_{ja}\quad (0\le j,a\le d-1),
$$
$\delta_{ja}$ being the Kronecker symbol 
$$
\delta_{ja}=\begin{cases}
1&\hbox{ if $\;j=a$,}
\\
0&\hbox{ if $\;j\neq a$.}
\end{cases}
$$
For $u\in E_{\bfc}$, 
we have
$$
u=u(0)u_0+u(1)u_1+\cdots+u(d-1)u_{d-1}.
$$
By definition, the characteristic polynomial of the linear recurrence relation $(\ref{Equation:relationrecurrencehomogene})$ is
$$
P(T)=T^d-c_1T^{d-1}-\cdots-c_{d-1}T -c_d\in \K[T],
$$
where $P(0)=-c_d\neq 0$.
\sk

A sequence $u\in \K^\Z$ satisfies a linear recurrence relation of order $\le d$ if and only if the sequences 
$$
\bigl(u(a+j)\bigr)_{a\in\Z} \quad
(j=0,1,2,\dots)
$$
generate a vector space over $\K$ of dimension $\le d$. Remark that a linear recurrence relation of order $d$ may be viewed as a linear recurrence relation of order $d+s$ for any $s\ge 1$. The dimension $d_0$ of this vector space is the minimal order of the linear recurrence relation satisfied by $u$. The linear recurrence relation of order $d_0$ satisfied by $u$ is unique; the characteristic polynomial of this relation generates an ideal of $\K[T]$ and the characteristic polynomials of these linear recurrence relations satisfied by $u$ are the monic polynomials of this ideal. 
\subsection{ Decomposed characteristic polynomial }\label{SS:PolynomeCaracteristiqueDecompose}

As a preliminary step, let us assume that the polynomial $P(T)$ of degree $d$ splits completely in $\K[T]$ as a product of linear factors: 
$$
P(T)= \prod_{j=1}^\ell (T-\gamma_j)^{t_j}
$$
with $t_j\ge 1$, $\; t_1+\cdots+t_\ell=d\,$ and with nonvanishing pairwise distinct elements $\,\gamma_1,\dots,\gamma_\ell$. Let us prove that a basis of $E_{\bfc}$ is given by the $d$ sequences 
$$
\bigl( a^i \gamma_j^a\bigr)_{a\in\Z} \quad (1\le j\le \ell, \;\; 0\le i\le t_j-1).
$$
 Firstly, we will show  
that these $d$ sequences belong to the vector space $E_{\bfc}$ %
(this part was omitted in \cite{Lev2}). 
Next, we will prove that they form a linearly independent subset of $E_{\bfc}$. 

By hypothesis, for $\,1\le j\le \ell\,$ and $\,0\le i\le t_j-1$, \, the derivative of order $i$ of the polynomial $P(T)$ is vanishing at the point $\gamma_j$. Let us recall that the characteristic of $\K$ is 0. Instead of using the operator $\rmd/\rmd T$, we will use the operator $T\rmd/\rmd T$ which has the property 
$$
\left(
T\frac{\rmd}{\rmd T}
\right)^i
T^h
=h^i T^h
$$
for $i\ge 0\,$ and $\,h\ge 0$; we stipulate that $h^i=1\,$ for $\,i=h=0$. 
For  $a\in\Z$, $\,1\le j\le \ell\,$ and $\, 0\le i\le t_j-1,\,$ the equation
$$
\left(
T\frac{\rmd}{\rmd T}
\right)^i (T^aP)(\gamma_j) =0 
$$
can be written as 
$$
(a+d)^i \gamma_j^{a+d} =
\sum_{k=1}^d (a+d-k)^i c_k \gamma_j^{a+d-k} \qquad (a\in\Z),
$$
with the convention that for $k=a+d$, the term $(a+d-k)^i $ takes the value $1$ for $i=0$ and the value $0$ for $i\ge 1$.
Therefore the sequence $\bigl( a^i \gamma_j^a\bigr)_{a\in\Z}$ belongs to the vector space $E_{\bfc}$ for $\,1\le j\le \ell\,$ and $\,0\le i\le t_j-1$.

{\bf Remark.} In the literature, there are at least two further classical proofs of this fact. One is to write the linear recurrence relation in a matrix form 
$$
U(a+1)=CU(a)
$$
with 
$$
U(a)=\left(
\begin{matrix}
u(a)
\\
u(a+1)
\\
\vdots
\\
u(a+d-1)
\end{matrix}
\right),
\quad
C=
\left(
\begin{matrix}
0&1&0&\cdots&0
\\
0&0&1&\cdots&0
\\
\vdots&\vdots&\vdots&\ddots&\vdots
\\
0&0&0&\cdots&1
\\
c_d&c_{d-1}&c_{d-2}&\cdots&c_1
\end{matrix}
\right).
$$
The determinant of $I_dT-C$ (the characteristic polynomial of $C$) is nothing but $P(T)$. To obtain the result, one writes the matrix $C$ in its Jordan normal form. \sk

The other method  
consists in introducing the formal power series
$$
U(z)=\sum_{a\ge 0} u(a) z^a.
$$
One has 
$$
\left(1-\sum_{i=1}^d c_iz^i \right) U(z)=
\sum_{j=0}^{d-1} \left(
u(j)-\sum_{i=1}^j c_i u(j-i)
\right) z^j.
$$
Hence $U(z)$ is a rational fraction, with denominator 
$$
1-\sum_{i=1}^d c_iz^i=z^dP(1/z)=\prod_{j=1}^\ell (1-\gamma_jz)^{t_j},
$$
while the numerator is of degree $<d$.
This rational fraction can be rewritten using a partial fraction decomposition: 
$$
U(z)=\sum_{j=1}^\ell 
\sum_{i=0}^{t_j-1} \frac{q_{ij}}{(1-\gamma_jz)^{i+1}}\cdotp
$$
For $1\le j\le \ell,\,$ one develops $(1-\gamma_jz)^{-i-1}$ as a power series expansion to get 
$$
\frac{1}{(1-\gamma_jz)^{i+1}}=
\frac{1}{i!\gamma_j^i} \left(\frac{\rmd}{\rmd z}\right)^i \frac{1}{1-\gamma_jz}
=\sum_{a\ge 0} \frac{(a+1)(a+2)\cdots(a+i)}{i!} \gamma_j^a z^a.
$$
This allows to write $u(a)$ as a linear combination of the elements 
 $\gamma_j^a$ with coefficients being polynomials of degree $<t_j$ evaluated at $a$. \sk

Proving the linear independence of the set of the $d$ sequences 
$$
\bigl( a^i \gamma_j^a\bigr)_{a\in\Z}, \quad \hbox{ with $\,1\le j\le \ell\,$ and $\,0\le i\le t_j-1$},
$$
boils down to showing that the determinant
of the matrix

\begin{equation}\label{Equation:matriceA}
 A=
 \left(
 \begin{matrix}
 1 & \gamma_1 &\gamma_1^2&\ldots&\gamma_1^k&\ldots&\gamma_1^{d-1}
 \\[2mm]
 0 & 1 & 2\gamma_1 & \ldots&k\gamma_1^{k-1}&\ldots&(d-1)\gamma_1^{d-2}
 \\[-1mm]
 \vdots&\vdots&\vdots&\ddots&\vdots&\ddots&\vdots
 \\
 0 & 0 & 0& \ldots&
 \binom{k}{t_1-1}\gamma_1^{k-t_1+1}&\ldots& \binom{d-1}{t_1-1}\gamma_1^{d-t_1}
 \\[2mm]\hline 
 \\[-3mm]
 1 & \gamma_2 &\gamma_2^2&\ldots&\gamma_2^k&\ldots&\gamma_2^{d-1}
 \\[2mm]
 0 & 1 & 2\gamma_2 & \ldots&k\gamma_2^{k-1}&\ldots&(d-1)\gamma_2^{d-2}
 \\[-1mm]
 \vdots&\vdots&\vdots&\ddots&\vdots&\ddots&\vdots
 \\[-1mm]
 0 & 0 & 0& \ldots&
 \binom{k}{t_2-1}\gamma_2^{k-t_2+1}&\ldots& \binom{d-1}{t_2-1}\gamma_2^{d-t_2}
 \\[2mm]\hline 
 \\[-4mm]
 \vdots&\vdots&\vdots&\vdots&\vdots&\vdots&\vdots
 \\[2mm]\hline 
 \\[-4mm]
 1 & \gamma_\ell & \gamma_\ell^2&\ldots&\gamma_\ell^k&\ldots& \gamma_\ell^{d-1}
 \\[2mm]
 0 & 1 & 2\gamma_\ell & \ldots&k\gamma_\ell^{k-1}&\ldots&(d-1)\gamma_\ell^{d-2}\\ 
 \vdots&\vdots&\vdots&\ddots&\vdots&\ddots&\vdots
 \\
 0 & 0 & 0& \ldots& \binom{k}{t_\ell-1}\gamma_\ell^{k-t_\ell+1}&\ldots& \binom{d-1}{t_\ell-1}\gamma_\ell^{d-t_\ell}\\
 \end{matrix}
 \right)
\end{equation}

\noindent is different from 0. Note that $\displaystyle \binom{r}{k}=0$ for $ r<k$. 
 Let us define $s_j$ to be 
$$s_j=t_1+\cdots+t_{j-1} \quad \mbox{ for } \quad 1\le j\le \ell\,\mbox{ with } \; s_1=0. 
$$
 For $1\leq j\leq \ell$, $0\le i\le t_j-1,\;\; 0\le k\le d-1$,
 the $(s_j+i,k)$ entry of the matrix $A$ is
$$
\frac{1}{i!} \left. \left( \frac{\rmd}{\rmd T}\right)^i T^k \right|_{T=\gamma_j}
 =
 \binom{k}{i}\gamma_j^{k-i}.
 $$

As a matter of fact, $A$ is best described as being made of $\ell$ vertical blocks $A_1, A_2, \dots , A_\ell$ where for $1\leq j\leq \ell$, $A_j$ is the $t_j\times d$ matrix
 \begin{equation} \label{blocks}
 A_j=
 \left(
 \begin{matrix}
 1 & \gamma_j & \gamma_j^2&\cdots&\gamma_j^{t_j-1} & \gamma_j^{t_j}&
\cdots&\gamma_j^{d-1}
 \\[3mm]
 0 & 1 &\binom{2}{1} \gamma_j & \ldots& \binom{t_j-1}{1}\gamma_j^{t_j-2}
& \binom{t_j}{1}\gamma_j^{t_j-1}&\ldots&\binom{d-1}{1}\gamma_j^{d-2}
 \\[3mm]
0&0&1&\ldots& \binom{t_j-1}{2}\gamma_j^{t_j-3}&\binom{t_j}{2}\gamma_j^{t_j-2}&\ldots&\binom{d-1}{2}\gamma_j^{d-3}\\ 
 \vdots&\vdots&\vdots&\ddots&\vdots&\vdots&\ddots&\vdots
 \\
 0 & 0 & 0&\dots&1&\binom{t_j}{t_j-1}\gamma_j
 &\cdots& \binom{d-1}{t_j-1}\gamma_j^{d-t_j}
 \end{matrix}
 \right).
 \end{equation}

 Denote by $C_0,\dots,C_{d-1}$ the $d$ columns of $A$. Let $b_0,\dots,b_{d-1}$ be 
complex numbers such that 
 $$
 b_0C_0+\cdots+b_{d-1}C_{d-1}={\bf 0}.
 $$
The left side of this equality is an element of $\K^d$, the $d$ components of which are all 0, and these $d$ relations mean that the polynomial 
$$
b_0+b_1T+\cdots+b_{d-1}T^{d-1}
$$
vanishes at the point $\gamma_j$ with multiplicity at least $t_j$ for $1\le j\le \ell$. Since $t_1+\cdots+t_\ell=d$, we deduce that $b_0=\cdots =b_{d-1}=0$.

The determinant of $A$ was calculated in \cite{Lev2}:
$$
\det A=\prod_{1\le i<j\le\ell} (\gamma_j-\gamma_i)^{t_it_j}.
$$

\sk 

 \indent 
 \subsection{\bf Interpolation.}

The matrix $A$ is associated with the linear system of $d$ equations in $d$ unknowns which amounts to finding a polynomial $f\in \K[z]$ of degree $<d$ for which the $d$ numbers 
$$
 \frac{\rmd^i f }{\rmd z^i} (\gamma_j), \qquad (1\le j\le \ell,\; 0\le i\le t_j-1)
$$
take prescribed values.
Sharp estimates related with this linear system are provided by Lemma 3.1 of \cite{NguyenRoy}.

 Before stating and proving the next proposition, we introduce the following notation. 

Let $g\in \K(z)$, let $z_0\in \K$ and let $t\ge 1$. Assume $z_0$ is not a pole of $g$. We set
$$
T_{g,z_0,t}(z)=\sum_{i=0}^{t-1} 
 \frac{\rmd^i g}{\rmd z^i}(z_0)
\frac{(z-z_0)^i}{i!} \cdotp
$$
In other words,
 $T_{g,z_0,t}$ is the unique polynomial in $\K[z]$ of degree $<t$ such that there exists $r(z)\in \K(z)$ having no pole at $z_0$ with 
$$
g(z)= T_{g,z_0,t}(z)+(z-z_0)^t r(z).
$$ 
Notice that if $g$ is a polynomial of degree $<t$, then $g=T_{g,z_0,t}$ for any $z_0\in \K$.

\begin{proposition}\label{Proposition:Interpolation}
Let $\gamma_j$ ($1\le j\le \ell$) be distinct elements in $\K$, $t_j$ ($1\le j\le \ell$) be positive integers, $\eta_{ij}$ ($1\le j\le \ell$, $0\le i\le t_j-1$) be elements in $\K$. Set $d=t_1+\cdots+t_\ell$. There exists a unique polynomial $f \in \K[z]$ of degree $<d$ satisfying 
\begin{equation}\label{Equation:InterpolationProblem}
 \frac{\rmd^i f}{\rmd z^i} (\gamma_j) =\eta_{ij}, \qquad (1\le j\le \ell,\; 0\le i\le t_j-1).
\end{equation}
For $j=1,\dots,\ell$, define
$$
h_j(z)=\prod_{\atop{1\le k\le \ell}{k\not= j}} 
\left(
\frac{z-\gamma_k}{\gamma_j-\gamma_k}
\right)^{t_k}
\quad\hbox{ 
and 
}\quad 
p_j(z)=\sum_{i=0}^{t_j-1}\eta_{ij} \frac{ (z-\gamma_j)^i}{i!}\cdotp
$$
Then the solution $f$ of the interpolation problem $(\ref{Equation:InterpolationProblem})$ is given by
\begin{equation}\label{Equation:InterpolationFormula}
f=\sum_{j=1}^\ell h_j T_{\frac{p_j}{h_j},\gamma_j,t_j}.
\end{equation}
\end{proposition}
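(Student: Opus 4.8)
The plan is to prove uniqueness first and then verify directly that the polynomial $f$ defined by $(\ref{Equation:InterpolationFormula})$ solves the interpolation problem $(\ref{Equation:InterpolationProblem})$. Uniqueness is immediate: if $f_1,f_2$ both have degree $<d$ and satisfy $(\ref{Equation:InterpolationProblem})$, their difference $g$ has degree $<d$ and all its derivatives of order $<t_j$ vanish at each $\gamma_j$, so $g$ is divisible by $\prod_{j=1}^\ell(z-\gamma_j)^{t_j}$, a polynomial of degree $d$; hence $g=0$. (Equivalently, existence and uniqueness follow at once from $\det A\neq 0$ computed above, since $A$ is exactly the matrix of the linear system $(\ref{Equation:InterpolationProblem})$ in the coefficients of $f$.) For the degree of $f$ in $(\ref{Equation:InterpolationFormula})$, each $h_j$ has degree $\sum_{k\neq j}t_k=d-t_j$ and each $T_{p_j/h_j,\gamma_j,t_j}$ has degree $<t_j$, so every summand has degree $<d$.

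It then remains to compute $\rmd^i f/\rmd z^i$ at a fixed node $\gamma_m$ for $0\le i\le t_m-1$, and I would split the sum $(\ref{Equation:InterpolationFormula})$ into the terms $j\neq m$ and the single term $j=m$. For $j\neq m$, the factor $\bigl((z-\gamma_m)/(\gamma_j-\gamma_m)\bigr)^{t_m}$ occurs in $h_j$, so $h_j$ vanishes to order $t_m$ at $\gamma_m$; multiplying by the polynomial $T_{p_j/h_j,\gamma_j,t_j}$ keeps the vanishing order $\ge t_m$, so every such summand has all derivatives of order $\le t_m-1$ equal to $0$ at $\gamma_m$. Thus only the term $j=m$ contributes.

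The heart of the argument is the term $j=m$. Here $h_m(\gamma_m)=\prod_{k\neq m}\bigl((\gamma_m-\gamma_k)/(\gamma_m-\gamma_k)\bigr)^{t_k}=1\neq0$, so $\gamma_m$ is neither a zero nor a pole of $p_m/h_m$ and the truncation $T_{p_m/h_m,\gamma_m,t_m}$ is well defined. By the defining property of $T$, there is $r\in\K(z)$ with no pole at $\gamma_m$ such that $p_m/h_m=T_{p_m/h_m,\gamma_m,t_m}+(z-\gamma_m)^{t_m}r$; multiplying by $h_m$ gives $h_m\,T_{p_m/h_m,\gamma_m,t_m}=p_m-(z-\gamma_m)^{t_m}h_m r$. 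Since $h_m r$ is regular at $\gamma_m$, the last term vanishes to order $t_m$ there, so $h_m\,T_{p_m/h_m,\gamma_m,t_m}$ and $p_m$ have identical derivatives of every order $i\le t_m-1$ at $\gamma_m$. As $p_m(z)=\sum_{i=0}^{t_m-1}\eta_{im}(z-\gamma_m)^i/i!$ is built so that $\rmd^i p_m/\rmd z^i(\gamma_m)=\eta_{im}$, combining the two cases yields $\rmd^i f/\rmd z^i(\gamma_m)=\eta_{im}$ for all $m$ and all $0\le i\le t_m-1$, which is exactly $(\ref{Equation:InterpolationProblem})$.

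The main obstacle I anticipate is precisely this last manipulation: seeing that multiplying the truncated Taylor polynomial of $p_m/h_m$ by $h_m$ recovers $p_m$ modulo $(z-\gamma_m)^{t_m}$. This is the step that correctly couples the two auxiliary polynomials $h_j$, which controls the vanishing at the other nodes, and $p_j$, which carries the prescribed data at $\gamma_j$; the truncation operator $T$ is what absorbs the denominator $h_m$ while keeping the error of order $\ge t_m$. Everything else, namely the degree bound and the vanishing at $\gamma_m$ for $j\neq m$, is routine bookkeeping on degrees and orders of vanishing.
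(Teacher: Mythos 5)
Your proof is correct and follows essentially the same route as the paper's: uniqueness via counting zeros with multiplicity, the degree bound from $\deg h_j = d - t_j$, vanishing to order $t_m$ of the terms $j\neq m$ at $\gamma_m$, and the key observation that $h_m\,T_{p_m/h_m,\gamma_m,t_m} - p_m$ vanishes to order $\ge t_m$ at $\gamma_m$. The only cosmetic difference is that the paper phrases the conditions throughout in terms of the truncation operator (as $T_{f,\gamma_k,t_k}=p_k$) while you work directly with derivatives, but the underlying computation is identical.
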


\begin{proof} 
The conditions $(\ref{Equation:InterpolationProblem})$ can be written 
$$
T_{f,\gamma_k,t_k}=p_k \quad\hbox{for}\quad k=1,\dots,\ell.
$$
The unicity is clear: the difference between two solutions is a polynomial of degree $<d$ which vanishes at $d$ points (including multiplicity), hence is the zero polynomial.

Since $h_j(\gamma_j)=1$, the quantity $q_j=T_{ \frac{p_j}{h_j},\gamma_j,t_j }$ is well defined and is a polynomial of degree $<t_j$. 
Since $h_j$ is a polynomial of degree $d-t_j$, the polynomial $f$ in $(\ref{Equation:InterpolationFormula})$, namely
$$
f=h_1q_1+\cdots+h_\ell q_\ell,
$$ 
is a polynomial of degree $< d$. Let us prove  that this polynomial $f$ verifies the equalities in  $(\ref{Equation:InterpolationProblem})$.
For $1\le k\not=j\le \ell$ and $0\le i\le t_k-1$, we have 
$$
 \frac{\rmd^i h_j}{\rmd z^i} (\gamma_k) =
0,
$$
and therefore also
$$
 \frac{\rmd^i (h_j q_j)}{\rmd z^i} 
(\gamma_k)=0.
$$
Hence, for the function $f$ given by $(\ref{Equation:InterpolationFormula})$ and for $1\le k \le \ell$, $0\le i\le t_k-1$, we have 
$$
 \frac{\rmd^i f}{\rmd z^i} (\gamma_k) =
 \frac{\rmd^i (h_k q_k)}{\rmd z^i}(\gamma_k).
$$ 
In other words, for $1\le k \le \ell$, we have 
$$
T_{f,\gamma_k,t_k}=T_{h_kq_k,\gamma_k,t_k}.
$$
By definition of $T$, the function $\displaystyle q_k- \frac{p_k}{h_k}$ has a zero of multiplicity $\ge t_k$ at $\gamma_k$, hence the same is true for the function $h_kq_k-p_k$.
Therefore, for any $k\in \{1,\ldots, \ell\}$, we have 
$$
T_{h_kq_k,\gamma_k,t_k}=p_k,
$$ 
whereupon, $T_{f,\gamma_k , t_k}=p_k$.
This completes the proof.
\end{proof}

The Lagrange--Hermite interpolation formula \cite{Hermite} deals with this question when $\K=\C$ and when the values $\eta_{ij}$ are of the form
$$
\eta_{ij}=
 \frac{\rmd^i F}{\rmd z^i} (\gamma_j) \qquad (1\le j\le \ell,\; 0\le i\le t_j-1)
$$
for a function $F$ which is analytic in a domain containing the points $\gamma_1,\dots,\gamma_\ell$. 

\begin{proposition}\label{Proposition:HermiteFormula}
Let $D$ be a domain in $\C$, $F$ an analytic function in $D$, $\gamma_1,\dots,\gamma_\ell$ distinct points in $D$  
and  $\Gamma$ a simple curve inside which the points $\gamma_1,
\dots , \gamma_\ell$ are located. 
Then the unique polynomial $f \in \C[z]$ of degree $<d$ satisfying 
$$
 \frac{\rmd^i f}{\rmd z^i} (\gamma_j) =
 \frac{\rmd^i F}{\rmd z^i} (\gamma_j)
, \qquad (1\le j\le \ell,\; 0\le i\le t_j-1)
$$
is given, for $z$ inside $\Gamma$, by
$$
f(z)=F(z)+\frac{1}{2i\pi} \int_\Gamma
\Phi(\zeta)
d\zeta
$$ 
with 
$$
\Phi(\zeta)=
\frac{
F(\zeta) }
{z-\zeta}
\prod_{j=1}^\ell \left(\frac{z-\gamma_j}{\zeta-\gamma_j}\right)^{t_j}.
$$
\end{proposition}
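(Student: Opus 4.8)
The plan is to introduce the monic polynomial $\omega(z)=\prod_{j=1}^\ell (z-\gamma_j)^{t_j}$ of degree $d$, so that the proposed kernel becomes $\Phi(\zeta)=\dfrac{F(\zeta)}{z-\zeta}\cdot\dfrac{\omega(z)}{\omega(\zeta)}$, and then to combine the claimed formula with the Cauchy integral formula. Since the $\gamma_j$ and $z$ lie inside the simple curve $\Gamma$ and $F$ is analytic in the region it bounds, Cauchy gives $F(z)=\dfrac{1}{2i\pi}\int_\Gamma\dfrac{F(\zeta)}{\zeta-z}\,\rmd\zeta$ for $z$ inside $\Gamma$. Adding this to $\dfrac{1}{2i\pi}\int_\Gamma\Phi(\zeta)\,\rmd\zeta$, I would merge the two integrands by means of the elementary identity
$$
\frac{1}{\zeta-z}-\frac{1}{\zeta-z}\cdot\frac{\omega(z)}{\omega(\zeta)}=\frac{\omega(\zeta)-\omega(z)}{(\zeta-z)\,\omega(\zeta)},
$$
which yields the single expression
$$
f(z)=\frac{1}{2i\pi}\int_\Gamma F(\zeta)\,\frac{\omega(\zeta)-\omega(z)}{(\zeta-z)\,\omega(\zeta)}\,\rmd\zeta.
$$

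The key observation for the degree bound is that, $\omega$ being a polynomial, $\dfrac{\omega(\zeta)-\omega(z)}{\zeta-z}$ is a polynomial in $z$ of degree $d-1$ whose coefficients are (polynomial, hence analytic) functions of $\zeta$. Integrating over the fixed contour $\Gamma$ therefore produces a polynomial in $z$ of degree $\le d-1<d$; this shows at once that the right-hand side of the proposed formula is genuinely a polynomial $f$ of degree $<d$, and it also explains why the apparent pole at $\zeta=z$ has disappeared from the integrand.

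To verify the interpolation conditions I would return to $f(z)-F(z)=\dfrac{1}{2i\pi}\int_\Gamma\Phi(\zeta)\,\rmd\zeta$ and factor out $\omega(z)$:
$$
f(z)-F(z)=\omega(z)\,R(z),\qquad R(z)=\frac{1}{2i\pi}\int_\Gamma\frac{F(\zeta)}{(z-\zeta)\,\omega(\zeta)}\,\rmd\zeta.
$$
For $z$ in the interior of $\Gamma$ the integrand is analytic in $z$, its only singularity $\zeta=z$ lying on $\Gamma$ and hence off the domain of $z$; so $R$ is analytic throughout the interior, in particular at each $\gamma_j$. Since $\omega$ vanishes to order exactly $t_j$ at $\gamma_j$, the product $\omega R$ vanishes to order $\ge t_j$ there, giving $\frac{\rmd^i f}{\rmd z^i}(\gamma_j)=\frac{\rmd^i F}{\rmd z^i}(\gamma_j)$ for $0\le i\le t_j-1$. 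Uniqueness of a polynomial of degree $<d$ meeting these conditions is already contained in Proposition \ref{Proposition:Interpolation}, since the difference of two solutions would vanish at $d$ points counted with multiplicity.

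The routine calculations are harmless; the step needing the most care is the justification that $R$ is analytic at the points $\gamma_j$. The pitfall is to confuse the pole at $\zeta=\gamma_j$ of the $\zeta$-integrand with the behaviour of $R$ as a function of $z$: the former is irrelevant because $\Gamma$ is fixed, while the latter is controlled solely by keeping $z$ strictly inside $\Gamma$, away from the contour. Making this explicit—differentiation under the integral sign is legitimate because $\zeta\mapsto F(\zeta)/\omega(\zeta)$ is continuous on $\Gamma$ and $|z-\zeta|$ is bounded below for $z$ in a compact interior neighbourhood—is the crux of the argument.
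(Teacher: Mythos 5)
Your proof is correct, but it takes a genuinely different route from the paper's. The paper evaluates $\frac{1}{2i\pi}\int_\Gamma \Phi(\zeta)\,\rmd\zeta$ by the residue theorem: the residue of $\Phi$ at $\zeta=z$ is $-F(z)$, and a computation (reduction to residues of $\zeta^m(z/\zeta)^t/(z-\zeta)$ at $\zeta=0$, together with the truncation identity $T_{\varphi_1\varphi_2,\gamma,t}=T_{\widetilde{\varphi}_1\varphi_2,\gamma,t}$ for $\widetilde{\varphi}_1=T_{\varphi_1,\gamma,t}$) shows that the residue at each $\gamma_j$ is exactly $h_j(z)\,T_{\frac{p_j}{h_j},\gamma_j,t_j}(z)$, i.e.\ the $j$-th term of the explicit solution $(\ref{Equation:InterpolationFormula})$ of Proposition \ref{Proposition:Interpolation}; the conclusion then follows by invoking that proposition wholesale. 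You instead bypass all residue computations at the $\gamma_j$: you absorb $F(z)$ via the Cauchy integral formula, merge the two integrands into $\frac{1}{2i\pi}\int_\Gamma F(\zeta)\,\frac{\omega(\zeta)-\omega(z)}{(\zeta-z)\,\omega(\zeta)}\,\rmd\zeta$ with $\omega(z)=\prod_{j}(z-\gamma_j)^{t_j}$, read off the degree bound from the fact that the divided difference $\frac{\omega(\zeta)-\omega(z)}{\zeta-z}$ is a polynomial of degree $d-1$ in $z$ with coefficients analytic in $\zeta$, and obtain the interpolation conditions from the factorization $f-F=\omega R$ with $R$ analytic inside $\Gamma$ (your point about where the singularity in $z$ lives, and the resulting legitimacy of differentiating under the integral sign, is exactly the right thing to be careful about). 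Only the easy uniqueness assertion of Proposition \ref{Proposition:Interpolation} is borrowed. Your route is more self-contained and elementary: it proves existence directly and needs neither the explicit formula $(\ref{Equation:InterpolationFormula})$ nor the calculus of the operators $T_{g,z_0,t}$. What the paper's residue computation buys in exchange is the precise identification of each residue of $\Phi$ with the corresponding term $h_jT_{\frac{p_j}{h_j},\gamma_j,t_j}$, i.e.\ the explicit link between the Hermite contour-integral formula and the algebraic interpolation formula, which is the point of juxtaposing the two propositions.
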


\begin{proof}
The residue at $\zeta=z$ of $\Phi(\zeta)$ is $-F(z)$. Under the assumptions of Proposition \ref{Proposition:HermiteFormula} and with the notations of Proposition \ref{Proposition:Interpolation}, we have 
$$
p_j=T_{F,\gamma_j,t_j}.
$$
It remains to show that for $1\le j\le \ell$, the residue at $\zeta=\gamma_j$ of $\Phi(\zeta)$ is 
$$
h_j (z)T_{\frac{p_j}{h_j},\gamma_j,t_j}(z).
$$
We first notice that for $m\in\Z$ and $t\in\Z$ with $t\ge 0$, the residue at $\zeta=0$ of 
$$
\zeta^m \left(\frac{z}{\zeta}\right)^t \frac{1}{z-\zeta}
$$
is $z^m$ for $m\le t-1$ and $z\not=0$, and is $0$ otherwise, namely for $z=0$ as well as for $m\ge t$.
Therefore, when $\varphi(\zeta)$ is analytic at $\zeta=\gamma$, the residue at $\zeta=\gamma$ of 
$$
\varphi(\zeta) \left(\frac{z-\gamma}{\zeta-\gamma}\right)^t \frac{1}{z-\zeta}
$$
is $T_{\varphi,\gamma,t}(z)$. Since 
$$
\Phi(\zeta)=\frac{
F(\zeta) }
{z-\zeta}
 \left(\frac{z-\gamma_j}{\zeta-\gamma_j}\right)^{t_j}
\frac{h_j(z)}{h_j(\zeta)},
$$
and since $h_j(\gamma_j)\not=0$, 
the residue at $\zeta=\gamma_j$ of $\Phi(\zeta)$ is 
$$
h_j (z)T_{\frac{F}{h_j},\gamma_j,t_j}(z).
$$
Finally, we notice that when $\varphi_1$ and $\varphi_2$ are analytic at $\gamma$, then 
$T_{\varphi_1\varphi_2,\gamma,t}=T_{\widetilde{\varphi}_1\varphi_2,\gamma,t}$ with 
$\widetilde{\varphi}_1=T_{\varphi_1,\gamma,t}$. This final remark with $\gamma=\gamma_j$, $t=t_j$, $\varphi_1=F$, $\widetilde{\varphi}_1=p_j$, $\varphi_2=1/h_j$ completes the proof. 

\end{proof}

There are other formulae for the solution to the interpolation problem $(\ref{Equation:InterpolationProblem})$. For instance, writing $t_j$ times each $\gamma_j$, one gets a sequence $z_1,\dots,z_d$, and the so--called
{\it Newton's divided differences interpolation polynomials} give formulae for the coefficients $c_0,\dots,c_{d-1}$ in
$$
f(z)=c_0+c_1(z-z_1)+c_2(z-z_1)(z-z_2)+\cdots+c_{d-1}(z-z_1)(z-z_2)\cdots(z-z_{d-1}).
$$

 \indent
 \subsection{\bf Polynomial combinations of powers.}

From the preceding sections, we deduce that the linear recurrence sequences over an
 algebraically closed field of characteristic 0 are in bijection with the linear combinations 
 of the powers $\gamma_j^a$ ($1\leq j \leq \ell$)
 with polynomial coefficients of the form
\begin{equation}\label{Equation:u(a)}
u(a)=\sum_{j=1}^\ell \sum_{i=0}^{t_j-1} v_{ij} a^i \gamma_j^a \qquad (a\in\Z).
\end{equation}
The piece of data 
 $\bfc=(c_1,\dots,c_d)\in \K^d$ is equivalent to being given $\ell$ distinct nonzero complex numbers 
$\gamma_1,\dots,\gamma_\ell$ and $\ell$ positive integers $t_1,\dots,t_\ell$ together with the property that
$$
T^d-c_1T^{d-1}-\cdots-c_{d-1}T-c_d
= \prod_{j=1}^\ell (T-\gamma_j)^{t_j}
$$
with $d=t_1+\cdots+t_\ell$. \sk

A change of basis for $\K^d$, involving the transition matrix 
$$
\left(
a^i \gamma_j^a
\right)_
{\atop{0\le a\le d-1}{1\le j\le \ell,0\le i\le t_j-1}},
$$
 allows to switch from the initial conditions $u(a)$ for $0\le a\le d-1$ to the $d$ coefficients $v_{ij}$ of $(\ref{Equation:u(a)})$.

Since 
$$
\frac{1}{1-\gamma_j z}=\sum_{a\ge 0} (\gamma_j z)^a
$$
and 
$$
 \left( z\frac{\rmd}{\rmd z}\right)^i 
(\gamma_j z)^a=a^i(\gamma_jz)^a,
$$ 
the generating function of the sequence $\bigl(u(a)\bigr)_{a\in\Z}$ given by $(\ref{Equation:u(a)})$ is
$$
U(z)= \sum_{a\ge 0} u(a) z^a=\sum_{j=1}^\ell \sum_{i=0}^{t_j-1} v_{ij} \left( z\frac{\rmd}{\rmd z}\right)^i 
\left(\frac{1}{1-\gamma_j z}\right),
$$
 which is a rational fraction with denominator $\displaystyle\prod_{j=1}^\ell (1-\gamma_j z)^{t_j}$, as expected.

 \bigskip
 \noindent
 \subsection{\bf The ring of linear recurrence sequences.}

A sum and a product of two polynomial combinations of powers is still a polynomial combination of powers.
If $U_1$ and $U_2$ are two linear recurrence sequences of characteristic polynomials
$P_1$ and $P_2$ respectively, then $U_1+U_2$ satisfies the linear recurrence, the
 characteristic polynomial of which is 
$$
\frac{P_1P_2}{\gcd(P_1,P_2)}\cdotp
$$ 
Consequently, the union of all vector spaces $E_{\bfc}$, with $\bfc$ 
running through the set 
of $d$--tuples $(c_1,\dots,c_d)\in \K^d$ subject to $c_d\neq 0$, and $d$ running through the set of integers $\ge 1$, is still a vector subspace of $\K^\Z$.\sk

Moreover, if the characteristic polynomials of the two linear recurrence sequences $U_1$ and $U_2$ are respectively
$$
P_1(T)=\prod_{j=1}^{\ell}(T-\gamma_j)^{t_j}
\quad
\hbox{and}
\quad
P_2(T)=\prod_{k=1}^{\ell'}(T-\gamma'_k)^{t'_k},
$$
then $U_1U_2$ satisfies the linear recurrence, the characteristic polynomial of which is 
$$
 \prod_{j=1}^{\ell} \prod_{k=1}^{\ell'} (T-\gamma_j\gamma'_k)^{t_j+t'_k-1}.
$$ 
As a consequence, the linear recurrence sequences form a ring.

\

\subsection{ Non homogeneous linear recurrence sequences}\label{SS:SuitesRecurrentesNonHomogenes}

Let us suppose now that a factorisation of the characteristic polynomial $P(T)$ of a linear recurrence relation 
 is of the form $P=QR$, with $R$ completely decomposed in $\K[T]$. Let us write
 $$
 P(T)=T^d-\sum_{i=1}^d c_iT^{d-i},
 \quad
 Q(T)=T^m-\sum_{i=1}^m b_iT^{m-i},
 \quad
R(T)=\prod_{j=1}^\ell (T-\gamma_j)^{t_j}.
$$
 Hence $d=m+t_1+\cdots+t_\ell$. Then the elements of $E_{\bfc}$
 are the sequences $\bigl(u(a))_{a\in\Z}$ for which there exist $d-m$ elements 
 $$
 \hbox{$\lambda_{ij}$ \qquad ($1\le j\le \ell$, $\quad 0\le i\le t_j-1$) }
 $$
 in $\K$ such that 
 \begin{equation}\label{Equation:relationrecurrenceNONhomogene}
 u(a+m)=b_1u(a+m-1)+\cdots+b_mu(a)+
\sum_{j=1}^\ell \sum_{i=0}^{t_j-1} \lambda_{ij}a^i \gamma_j^a.
\end{equation} 
In order to define an element $\bigl(u(a)\bigl)_{a\in\Z}$ of $E_{\bfc}$ by using the homogenous recurrence relation in $(\ref{Equation:relationrecurrencehomogene})$, we have to give $d$ initial values, for instance $u(0),\dots,{u(d-1)}$. In order to define this sequence by using the non homogeneous recurrence relation $(\ref{Equation:relationrecurrenceNONhomogene})$, it is sufficient to have $m$ initial conditions, say $u(0),\dots,u(m-1)$, but we also have to know the elements $\lambda_{ij}$ for $1\le j\le \ell$ and $0\le i\le t_j-1$ (which altogether are $d$ conditions, as is required in a vector space of dimension $d$).\sk

Consider the transition matrix associated to the change of basis, allowing to switch from the initial conditions 
$$
\hbox{$u(a)\;$ for $\;0\le a\le d-1$}
$$
 to the initial conditions 
 $$
 \hbox{$u(a)\;$ for $\;0\le a\le m-1\;$ and $\;\lambda_{ij}\;$ for $\; 1\le j\le \ell\;$ and $\;0\le i\le t_j-1$}.
 $$
 It is a matrix which has only a diagonal of two blocks, 
$$
\left(
\begin{matrix}
I_m&0\\
0&A
\end{matrix}
\right)
\quad 
\hbox{ with }\quad
A=\left(
\begin{matrix}
A_1
\\
\vdots
\\
A_\ell
\end{matrix}
\right).
$$
The first block $I_m$ is the $m\times m$ identity matrix. The second block $A$ is a generalized Vandermonde matrix similar to the matrix in $(\ref{Equation:matriceA})$ made of the blocks $A_1,\dots, A_\ell$ described in $(\ref{blocks})$.\sk
 
A particular case is the trivial one when 
 $P=Q$, $m=d$ and $R=1$. Another one is when 
 $P=R$, $Q=1$ and $m=0$, which corresponds to the case studied in Section $\ref{SS:PolynomeCaracteristiqueDecompose}$.
 \sk
 
 \indent
 {\bf Example.} Let us consider
 $$P(T)=(T-\gamma)^2, \quad Q(T)=R(T)=T-\gamma.$$
 There are three ways of defining an element $\big( u(a)\bigr)_{a\in\Z}$ of the vector space $E_{\bfc}$ when $\bfc=(2,-1)$. 
 The first one is to mention that the sequence satisfies the binary linear recurrence relation 
 $$
 u(a+2)=2 u(a+1) - u(a) \quad (a\in\Z)
 $$
and give two initial values, for, say $u(0)$ and $u(1)$. The second one is to write 
 $$
 u(a)=(\lambda_1+\lambda_2 a) \gamma^a \quad (a\in\Z)
 $$
 and give the values of $\lambda_1$ and $\lambda_2$. The third one is in-between the previous ones; one writes that the sequence satisfies 
 $$
 u(a+1)=\gamma u(a)+\lambda \gamma^a \quad (a\in\Z)
 $$ 
while providing an initial value, for, say $u(0)$, and the value of $\lambda$. 

\subsection{ Exponential polynomials}\label{SS:PolynomesExponentiels}

The sequence of derivatives of an exponential polynomial evaluated at one point satisfies a linear recurrence relation. This allows us to deduce the following well known result (Ch. I, \S 7 of \cite{Siegel}).

\begin{lemma}\label{Lemma:ExponentialPolynomials}
Let $a_1(z),\dots,a_\ell(z)$ be nonzero polynomials of $\C[z]$ of degrees smaller than $ t_1,\dots,t_\ell$ respectively. Let $\gamma_1,\dots,\gamma_\ell$ be distinct complex numbers. Let us suppose that the function 
$$
F(z)=a_1(z)e^{\gamma_1 z}+\cdots+ a_\ell(z)e^{\gamma_\ell z}
$$
is not identically $0$. Then its vanishing order at a point $z_0$ is smaller than or equal to $t_1+\cdots+t_\ell -1$.
\end{lemma}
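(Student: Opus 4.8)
The plan is to reduce the analytic statement about the vanishing order of $F$ to the elementary fact that a linear recurrence sequence is completely determined by its first $d$ terms. The bridge between the two is the observation, announced in the sentence preceding the lemma, that the derivatives of $F$ at $z_0$ form a linear recurrence sequence. To see this, write $D=\rmd/\rmd z$ and note that $(D-\gamma_j)\bigl(p(z)e^{\gamma_j z}\bigr)=p'(z)e^{\gamma_j z}$ for every polynomial $p$; iterating $t_j$ times gives $(D-\gamma_j)^{t_j}\bigl(p(z)e^{\gamma_j z}\bigr)=p^{(t_j)}(z)e^{\gamma_j z}$. Since $\deg a_j<t_j$, the operator $(D-\gamma_j)^{t_j}$ annihilates the summand $a_j(z)e^{\gamma_j z}$, and because these operators commute, the single operator $P(D)=\prod_{j=1}^\ell (D-\gamma_j)^{t_j}$ annihilates the whole function $F$.

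First I would write $P(T)=\prod_{j=1}^\ell (T-\gamma_j)^{t_j}=T^d-c_1T^{d-1}-\cdots-c_d$ with $d=t_1+\cdots+t_\ell$, so that $P(D)F=0$ reads $F^{(d)}=c_1F^{(d-1)}+\cdots+c_dF$. Differentiating this identity $a$ times and evaluating at $z_0$, and setting $u(a)=F^{(a)}(z_0)$, I obtain for every $a\ge 0$ the homogeneous recurrence relation $(\ref{Equation:relationrecurrencehomogene})$, namely $u(a+d)=c_1u(a+d-1)+\cdots+c_du(a)$, whose characteristic polynomial is exactly $P$. Thus $\bigl(u(a)\bigr)_{a\ge 0}$ is a linear recurrence sequence of order $d$.

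Next I would read off the conclusion. The recurrence expresses $u(a+d)$ as a linear combination of the $d$ preceding terms $u(a+d-1),\dots,u(a)$, so — as in the forward-determination property underlying the dimension count for $E_{\bfc}$ — the sequence is determined by its initial segment $u(0),\dots,u(d-1)$; in particular, if these $d$ values all vanish then $u(a)=0$ for every $a\ge 0$. But the order of vanishing of $F$ at $z_0$ being $\ge d$ is precisely the condition $u(0)=u(1)=\cdots=u(d-1)=0$. By the recurrence this forces $F^{(a)}(z_0)=0$ for all $a\ge 0$, and since $F$ is entire its Taylor series at $z_0$ vanishes identically, whence $F\equiv 0$, against the hypothesis. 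Therefore the vanishing order is at most $d-1=t_1+\cdots+t_\ell-1$.

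The one point that requires care, and the place where the hypothesis on the degrees enters decisively, is the claim that the recurrence has order exactly $d$: it is the inequality $\deg a_j<t_j$ that makes $(D-\gamma_j)^{t_j}$ already annihilate $a_j(z)e^{\gamma_j z}$, so that the annihilating polynomial $P$ has degree $d$ and no larger. Were some degree to reach $t_j$, the operator, the order $d$, and hence the bound on the vanishing order would all have to be increased accordingly. Everything else is the routine forward propagation of zeros through the recurrence.
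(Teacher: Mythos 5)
Your proof is correct, and it rests on the same bridge as the paper's second proof: the derivatives $u(a)=F^{(a)}(z_0)$ form a linear recurrence sequence of order $d=t_1+\cdots+t_\ell$ with characteristic polynomial $P(T)=\prod_{j}(T-\gamma_j)^{t_j}$, and the vanishing of $u(0),\dots,u(d-1)$ then propagates forward to force $F\equiv 0$. Where you genuinely differ is in how the recurrence is established. The paper expands $F(z+z_0)$ in a Taylor series, computes $u(a)=\sum_{j}\sum_{i}a_{ij}\,a(a-1)\cdots(a-i+1)\gamma_j^{a-i}$ explicitly, recognizes this as a polynomial combination of powers in the sense of $(\ref{Equation:u(a)})$, and invokes the structure theory of Section \ref{S:SuitesRecurrentesLineaires}; since that framework (sequences indexed by $\Z$, with $c_d=-P(0)\neq 0$) excludes a vanishing characteristic root, the paper must first normalize, without loss of generality, all $\gamma_j$ to be nonzero. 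You instead note that $(D-\gamma_j)^{t_j}$ annihilates $a_j(z)e^{\gamma_j z}$ precisely because $\deg a_j<t_j$, so the constant-coefficient operator $P(D)$ annihilates $F$, and you differentiate the resulting identity $a$ times and evaluate at $z_0$ to get the recurrence $(\ref{Equation:relationrecurrencehomogene})$ directly. This is more self-contained (it does not lean on the results of Sections 2.2 and 2.4) and it quietly removes the need for the normalization: forward propagation of zeros never uses $c_d\neq 0$, so your argument is indifferent to whether some $\gamma_j$ vanishes. What the paper's route buys is exactly what the lemma is placed there to illustrate, namely the explicit connection between exponential polynomials and the machinery of linear recurrence sequences developed earlier; the paper also records a second, shorter proof by induction on $d$ (reduce to $\gamma_1=0$ and subtract $a_1$), which your write-up does not address and need not.
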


\begin{proof}
Define $d=t_1+\cdots+t_\ell$. We give two proofs of Lemma $\ref{Lemma:ExponentialPolynomials}$. A short one by induction on $d$ is as follows. For $d=1$ we have $\ell=1$ and $F$ has no zero. Assume $\ell\ge 2$. Without loss of generality we may assume $\gamma_1=0$. If $F$ has a zero of multiplicity $\ge T_0$ at $z_0$, then $F(z)-a_1(z)$ has a zero of multiplicity $\ge T_0-t_1$ at $z_0$. The result follows. 

\par
Our second proof relates Lemma $\ref{Lemma:ExponentialPolynomials}$ with linear recurrence sequences. We now assume $\gamma_1,\dots,\gamma_\ell$ all nonzero, as we may without loss of generality. 
Write the Taylor expansion of $F(z+z_0)$ at $z=0$:
$$
F(z+z_0)=\sum_{a\ge 0} \frac{u(a)}{a!} z^a.
$$
Let us show that the sequence $(u(0),u(1),\dots,u(a),\dots)$ satisfies a linear recurrence relation of order $\le d$. Define $a_{ij}\in \C$ by
$$
a_j(z+z_0)e^{\gamma_j z_0}=\sum_{i=0}^{t_j-1} a_{ij} z^i \quad (1\le j\le \ell),
$$
so that
$$
F(z+z_0)=\sum_{j=1}^\ell 
\sum_{i=0}^{t_j-1} a_{ij} z^i e^{\gamma_j z}.
$$
Since $\gamma_j\not=0$ for $j=1,\dots,\ell$, 
$$
u(a)=\sum_{j=1}^\ell \sum_{i=0}^{t_j-1} a_{ij} a(a-1)\cdots(a-i+1) \gamma_j^{a-i}
$$
has the same form as in $(\ref{Equation:u(a)})$. Therefore the sequence $\big( u(a)\bigr)_{a\in\Z}$ satisfies a linear recurrence relation of order $\le d$. 
It follows that the conditions 
$$u(0)=\cdots=u(d-1)=0$$
 imply $u(a)=0$ for any $a\ge 0$. 
\end{proof} 

We can state this lemma in the following way: When the complex numbers $\gamma_j$ are distinct, the determinant 
$$
\left|
\left(\frac{\rmd}{\rmd z}\right)^a \bigl(z^i e^{\gamma_jz}\bigr)_{z=0}
\right|
_{\atop{0\le i\le t_j-1, \; 1\le j\le \ell}{0\le a\le d-1}}
$$
is different from $0$. This is no surprise that we come across the determinant of the matrix $(\ref{Equation:matriceA})$.

\section{Families of binary forms }\label{S:FamillesFormesBinaires}

The equations $(\ref{Equation:FaSomme})$ and $(\ref{Equation:FaProduit})$ give, for $1\le h\le d$ and $a\in\Z$,
\begin{equation}\label{EquationUh(a)}
U_h(a)=\sum_{1\le i_1<\cdots<i_h\le d} \alpha_{i_1}\cdots \alpha_{i_h} (\epsilon_{i_1}\cdots \epsilon_{i_h})^a.
\end{equation}
For example, for $a\in\Z$,
$$
U_1(a)=\sum_{i=1}^d \alpha_i\epsilon_i^a,
\quad
U_d(a)=\prod_{i=1}^d \alpha_i\epsilon_i^a.
$$

The relations $(\ref{EquationUh(a)})$ show that for $1\le h\le d$, the sequence $\bigl(U_h(a)\bigr)_{a\in\Z}$ is a linear combination 
of the sequences 
$$\bigl( (\epsilon_{i_1}\cdots \epsilon_{i_h})^a\bigr)_{a\in\Z}, \quad (1\le i_1<\cdots<i_h\le d). 
$$ 
For $1\le h\le d$, consider the set 
$$
\calE_h=
\{
\epsilon_{i_1}\cdots \epsilon_{i_h}\; \mid\; 1\le i_1<\cdots<i_h\le d
\}
$$
and note $m_h$ its cardinality. The elements of $\calE_h$ are values of monomials in $m_1$ variables of degree $h$. 
The map from $\calE_h$ to $\calE_{d-h}$ defined by
$$\eta\mapsto \epsilon_1\cdots\epsilon_d\eta^{-1}$$
is a bijection and we have 
$$
m_h=m_{d-h}\le\min\left\{ \binom{d}{h},
\;
 \binom{m_1+h-1}{h},
 \; \binom{m_1+d-h-1}{d-h}\right\}.
$$

The sequence $\bigl(U_h(a)\bigr)_{a\in\Z}$ satisfies the linear recurrence relation of order $m_h$ with the characteristic polynomial
$$
\prod_{\eta\in\calE_h}( T-\eta).
$$
This polynomial is also written as 
$$
\prod_{\eta\in\calE_{d-h}} (T- \epsilon_1\cdots\epsilon_d\eta^{-1}),
$$
which is matching $(\ref{EquationUh(a)})$ via
$$ 
U_h(a)=U_d(a) \sum_{1\le j_1<\cdots<j_{d-h}\le d} (\alpha_{j_1}\cdots \alpha_{j_{d-h}})^{-1} (\epsilon_{j_1}\cdots \epsilon_{j_{d-h}})^{-a}.
$$
For example, the sequence $\bigl(U_{d-1}(a)\bigr)_{a\in\Z}$ satisfies the linear recurrence relation of order $d$, the characteristic polynomial of which is 
$$
\prod_{i=1}^d (T-\epsilon_1\cdots \epsilon_d \epsilon_i^{-1})
=
(T-\epsilon_2\cdots \epsilon_d)(T-\epsilon_1\epsilon_3\cdots \epsilon_d)\cdots(T-\epsilon_1\cdots \epsilon_{d-1}).
$$

The case $\epsilon_1=\ldots=\epsilon_d$ is trivial: we have 
$$
U_h(a)=\epsilon_1^a U_h(0)=(-1)^h a_h \epsilon_1^a,
$$
 and each of the sequences $\bigl(U_h(a)\bigr)_{a\in\Z}$ satisfies
$$
U_h(a+1)=\epsilon_1U_h(a).
$$

Let us consider the example 
$$
\epsilon_1=\ldots=\epsilon_\ell=\epsilon, \quad 
\epsilon_{\ell+1}=\ldots=\epsilon_d=\eta,
$$
with $\epsilon$ and $\eta$ being two distinct complex numbers. We have 
$$
\calE_1=\{\epsilon,\eta\},
\qquad
\calE_{d-1}=\{\epsilon^{\ell-1} \eta^{d-\ell}, \epsilon^{\ell} \eta^{d-\ell-1}\}
$$
and
$$
\calE_2=\{\epsilon^2,\epsilon\eta,\eta^2\},
\quad
\calE_{d-2}=\{\epsilon^{\ell-2} \eta^{d-\ell}, \epsilon^{\ell-1} \eta^{d-\ell-1}, \epsilon^{\ell} \eta^{d-\ell-2}\}.
$$
The sequence $\bigl(U_1(a)\bigr)_{a\in\Z}$ satisfies the binary recurrence relation, the characteristic polynomial of which is
$$
(T-\epsilon)(T-\eta);
$$
the sequence $\bigl(U_{d-1}(a)\bigr)_{a\in\Z}$ satisfies the binary recurrence relation,
 the characteristic polynomial of which is
$$
(T-\epsilon^{\ell-1} \eta^{d-\ell})(T-\epsilon^{\ell} \eta^{d-\ell-1}),
$$
while the sequence $\bigl(U_2(a)\bigr)_{a\in\Z}$ satisfies the ternary recurrence relation,
 the characteristic polynomial of which is 
$$
(T-\epsilon^2)(T-\eta^2)(T-\epsilon\eta).
$$
In particular, if one writes 
$$
(T-\epsilon^2)(T-\eta^2)=T^2-AT-B,
$$
then there exists a constant $C\in\C$ such that, for any $a\in\Z$, one has 
$$
U_2(a+2)=AU_2(a+1)+BU_2(a)+C (\epsilon\eta)^a.
$$
Finally, the sequence $\bigl(U_{d-2}(a)\bigr)_{a\in\Z}$ satisfies the ternary recurrence relation, the characteristic polynomial of which is 
$$
(T-\epsilon^{\ell-2}\eta^{d-\ell})
(T-\epsilon^{\ell-1}\eta^{d-\ell-1})
(T-\epsilon^{\ell}\eta^{d-\ell-2}).
$$

\section*{Acknowledgments} 
{\small 
The authors want to thank the organizers of the {\sl Second International Conference on Pure and Applied Mathematics} held in Goroka (Papua New Guinea) and to express their gratitude to   
Samuel Kopamu for having invited them to give lectures. We are also thankful to Pietro Corvaja for two courses he gave on linear recurrence sequences, one in Bamako (Mali) in 2010 and the next one in Kozhikode (Kerala, India) in 2013. 
 
}

\vfill

 \vfill
 
\hbox{
\small
\vbox{
\hbox{Claude LEVESQUE}
	\hbox{D\'{e}partement de math\'{e}matiques et de statistique
	}
	\hbox{Universit\'{e} Laval
	}
	\hbox{Qu\'{e}bec (Qu\'{e}bec)
	}
	\hbox{CANADA G1V 0A6
	}
	\hbox{Claude.Levesque@mat.ulaval.ca
	}
}	
\hfill
\vbox{	\hbox{Michel WALDSCHMIDT 
	}
	\hbox{Sorbonne Universit\'es
	}
	\hbox{UPMC Univ Paris 06
	}
	\hbox{UMR 7586 IMJ-PRG
	}
	\hbox{F -- 75005 Paris, France 
	}
	\hbox{michel.waldschmidt@imj-prg.fr}
}	
}	

\vfill

\end{document}